\providecommand{\U}[1]{\protect\rule{.1in}{.1in}}
\newtheorem{theorem}{Theorem}
\newtheorem{definition}[theorem]{Definition}
\newtheorem{lemma}[theorem]{Lemma}
\newtheorem{proposition}[theorem]{Proposition}
\newtheorem{remark}[theorem]{Remark}
\DeclareMathOperator{\diag}{diag}
\DeclareMathOperator{\smp}{sim}
\DeclareMathOperator{\shr}{shear}
\begin{document}


\title[Wavelet Coorbit Spaces in Dimension 2]{Classifying Wavelet Coorbit Spaces in Dimension 2}

\author[  Vaishakh. K. J]{Vaishakh Jayaprakash} 
\address{Vaishakh Jayaprakash\\Department of Mathematics, Cochin University of Science and Technology, 682022, Cochin, Kerala, India}
\email{vaishakhkj001@gmail.com}

\author[H. F\"uhr]{Hartmut F\"uhr}

\address{Hartmut F\"uhr \\ Lehrstuhl f\"ur Geometrie und Analysis\\ RWTH Aachen University, D-52056 Aachen,
Germany}
\email{fuehr@mathga.rwth-aachen.de}

\author[N. Asharaf]{Noufal Asharaf}
\address{Noufal Asharaf\\Department of Mathematics, Cochin University of Science and Technology, 682022, Cochin, Kerala, India}
\email{noufal@cusat.ac.in}




\begin{abstract}
Coorbit spaces provide a rigorous framework for the assessment of the approximation theoretic properties of generalized wavelet systems. It is therefore useful to understand when two different wavelet systems give rise to the same scales of coorbit spaces. This paper provides an exhaustive answer to this question for the case of continuous wavelet transforms associated with matrix groups in dimension two. 
\end{abstract}


\keywords{Coorbit spaces; wavelet transforms; classification; }



\maketitle
\section{Introduction}\label{sec1}

The continuous wavelet transform was initially defined to act on functions in $L^2(\mathbb{R})$ \cite{MR747432}. Early on, it was recognized that this transform and its inversion formula were closely related to the representation theory of the \textit{affine group}, the semidirect product $\mathbb{R} \rtimes \mathbb{R}^*$ \cite{MR868528}. This realization opened the door to the extension of the construction to higher dimensions, by considering groups of the type $\mathbb{R}^d \rtimes H$ acting naturally on $L^2(\mathbb{R}^d)$, using translations and dilations from the \textit{dilation group} $H \le GL(d,\mathbb{R})$. The first generalization used the so-called \textit{similitude group} $H = \mathbb{R}^+ \cdot SO(d)$ \cite{MR1010910}, but it was observed early on that this approach can be adapted to a large variety of matrix groups \cite{MR1377491}.


The full scope of this observation became apparent through many different dilation groups and associated wavelet transforms that have since been studied. We refer to \cite{MR1122655, MR2643586, MR1979396, MR2130226,MR1881293} as a small, subjective sample of the literature that has been devoted to the topic. 

This large freedom of choice in the construction of continuous wavelet transforms raises the question of properly understanding the impact of the matrix group on crucial properties of the associated wavelet systems. For example, one may ask which of the different groups give rise to fundamentally different wavelet systems, with the caveat that the phrase ``fundamentally different'' is yet to be defined.

In this paper, we use the approximation-theoretic properties of a wavelet system as a means of comparison.  It was recognized early on that wavelet bases can be used to characterize smoothness spaces such as Besov spaces via the decay properties of the associated wavelet coefficients \cite{MR1990555,MR1085487}. In a very concrete and sharp sense, Besov space norms can characterize signals that can be efficiently approximated by a wavelet system. This fact has been recognized as being extremely useful for the development of mathematical justifications and heuristics for wavelet-based data compression and denoising algorithms. We refer to Remark \ref{rem:Besov_imag} for further elaboration and background.

When replacing the isotropic scaling (or, equivalently, the similitude group) that underlies the wavelet systems characterizing Besov spaces by a more general dilation group $H$, of the kind considered in this paper, the question then arises whether a similar formalism describing the approximation behaviour of the resulting wavelet systems is still available. This is precisely the motivation for the construction of coorbit spaces \cite{bib2,MR1021139}. Coorbit space theory provides a scale of spaces that have the same properties, in relation to the wavelet systems associated to $H$, as the Besov spaces have in relation to wavelet systems associated to the similitude group. More background and further elaborations on these connections may be found in Remarks \ref{rem:weighted_coorbit} and \ref{rem:Besov_coorbit} below.


To summarize, coorbit spaces provide a relevant criterion for the systematic comparison of wavelet systems in higher dimensions: We define two admissible matrix groups $H_1, H_2< {\rm GL}(d,\mathbb{R})$ to be \textit{coorbit equivalent}, if their coorbit spaces coincide, for all coefficient spaces $Y=L^p$, i.e., if $Co_{H_1}(L^p) = Co_{H_2}(L^p)$, $1 \le p \le \infty$. Informally speaking, the associated wavelet systems will then have the same spaces of well-approximated signals. Note that this comparison does not provide any \textit{ranking} in terms of usefulness of the various systems for concrete applications, we just aim to clarify when two wavelet systems can even be expected to yield different performances, based on their approximation theoretic properties.

This paper provides a full classification for the irreducible setting in dimension two. As it turns out, the two-dimensional case and the associated family of candidate groups are still sufficiently tame, so that by proper applications of the results obtained in \cite{MR4669340, MR4832531}, a full classification up to coorbit equivalence can be obtained with moderate technical effort. Our main result is formulated as follows:
\begin{theorem} \label{thm:main}
\begin{enumerate}
\item[(a)] Let $H<GL(2,\mathbb{R})$ denote an irreducibly admissible matrix group with associated open dual orbit $\mathcal{O}$. Then $\mathcal{O}$ has either 1,2, or 4 connected components. 
\item[(b)] Let $H_1, H_2 < GL(2,\mathbb{R})$ denote two irreducibly admissible matrix groups, with associated open dual orbits $\mathcal{O}_1$ and $\mathcal{O}_2$, respectively. Then $H_1 \sim_{Co} H_2$ if and only if $\mathcal{O}_1 = \mathcal{O}_2$, and either 
\begin{enumerate}
    \item[(i)] $\mathcal{O}_1$ has 1 or 4 connected components; or 
    \item[(ii)] $\mathcal{O}_1$ has 2 connected components, and $H_1$ and $H_2$ have the same connected component of the identity. 
\end{enumerate}
\item[(c)] Two distinct irreducibly admissible matrix groups in dimension two are coorbit equivalent only if each contains a subgroup conjugate to the similitude group, or if they share a common open subgroup of finite index. 
\item[(d)] A set of representatives modulo coorbit equivalence is given by the similitude group together with two families of groups that are naturally indexed by suitable pairs of real parameters. 
\end{enumerate}
\end{theorem}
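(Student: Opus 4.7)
The plan is to reduce Theorem~\ref{thm:main} to a case-by-case analysis of irreducibly admissible matrix subgroups of $GL(2,\mathbb{R})$ up to conjugation, and then apply the coorbit-equivalence criteria of \cite{MR4669340, MR4832531}. The foundation is the classification of closed connected subgroups $H^\circ < GL(2,\mathbb{R})$ of dimension two acting with an open dual orbit: up to conjugation, $H^\circ$ falls into one of three classes, namely the similitude group $\mathbb{R}^+\cdot SO(2)$, the positive diagonal group $\{\diag(a,b): a,b > 0\}$, or a shearlet-type group $\{\diag(a,a^\alpha)\shr(s) : a > 0,\ s\in\mathbb{R}\}$ indexed by a real parameter $\alpha$. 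A general admissible $H$ is obtained from $H^\circ$ by adjoining finitely many normalizing matrices, yielding a finite-index extension whose discrete part permutes the connected components of the open dual orbit.

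For part (a), the complement $\widehat{\mathbb{R}^2}\setminus\mathcal{O}$ is a finite union of lower-dimensional $H^\circ$-orbits, hence is contained in a finite union of lines through the origin. Direct inspection of each representative class yields $\mathcal{O} = \mathbb{R}^2\setminus\{0\}$ (one component) for similitude type, $\mathcal{O} = (\mathbb{R}^*)^2$ (four components) for diagonal type, and $\mathcal{O} = \{\xi_1 \neq 0\}$ (two components) for shearlet type. Passing to an admissible extension $H^\circ \le H$ by normalizing matrices can only identify components, so the count can only decrease, and $1$, $2$, $4$ are the only possibilities.

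For parts (b), (c), and (d) I invoke the criterion from \cite{MR4669340, MR4832531} stating that $H_1 \sim_{Co} H_2$ forces $\mathcal{O}_1 = \mathcal{O}_2$ and is otherwise controlled by compatibility of the weight functions (built from the modular function and a Weil section) on each component of the shared orbit. In the $1$-component case the identity component must be a conjugate similitude group, and the weight structure on $\mathbb{R}^2\setminus\{0\}$ is rigid enough that equivalence is automatic, confirming the first alternative in (c). In the $4$-component case the identity component is forced to be the positive diagonal group, the four quadrants are permuted freely by the discrete part, and the weights on each quadrant are determined up to bounded factors, again giving automatic equivalence. In the $2$-component case the parameter $\alpha$ defining the identity component is recoverable from the asymptotics of the weight on each half-plane, so equivalence forces $H_1^\circ = H_2^\circ$, which is the common open finite-index subgroup required by the second alternative of (c). Part (d) is the bookkeeping outcome: modulo coorbit equivalence, a transversal consists of the similitude group together with the diagonal-type and shearlet-type families, each naturally parameterized by real data.

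The main obstacle I anticipate is the necessity direction in the $2$-component case: to show that $H_1^\circ \neq H_2^\circ$ implies $Co_{H_1}(L^p) \neq Co_{H_2}(L^p)$, one must construct test functions whose continuous wavelet transforms lie in the weighted $L^p$ norm of one group but not the other. This amounts to proving that the exponent $\alpha$ is a coorbit invariant, which reduces to a fine comparison of weighted $L^p$ norms on a half-plane and depends essentially on the sharp dual-orbit weight asymptotics established in \cite{MR4669340, MR4832531}.
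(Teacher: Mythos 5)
Your overall strategy---reduce to the conjugacy classification into similitude, diagonal, and shearlet families, use $\mathcal{O}_1=\mathcal{O}_2$ as the first necessary condition, then decide equivalence within each family---matches the paper's. But the two analytically hard steps are asserted or deferred rather than proved. First, in the one-component case you claim that ``the weight structure on $\mathbb{R}^2\setminus\{0\}$ is rigid enough that equivalence is automatic.'' What is actually needed is $\mathcal{S}_{H_{\smp}}=GL(2,\mathbb{R})$, i.e.\ that $AH_{\smp}A^{-1}\sim_{Co}H_{\smp}$ for \emph{every} invertible $A$ even though these groups are genuinely different; this is a nontrivial theorem (the paper imports it from Section 9 of \cite{bib7}), and ``rigidity of the weight'' is not an argument for it. Second, and more seriously, the necessity direction in the two-component case---that coorbit equivalence forces equal identity components, in particular that the anisotropy exponent $c$ is a coorbit invariant---is precisely the point you flag as ``the main obstacle I anticipate'' and then leave open. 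Since this is the content of the ``only if'' direction of part (b)(ii) and of part (c), the proposal as it stands does not prove the theorem.

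It is worth contrasting your intended weight-asymptotics route with the paper's, which avoids weighted $L^p$ comparisons on the orbit entirely. The paper uses the chain $N_H\subset\mathcal{S}_H\subset\mathcal{S}_{\mathcal{O}}$ from (\ref{eqn:incl_symmetry}) together with the elementary matrix computation that for $H=H_{\diag}$ and $H=H_{\shr}^c$ one has $\mathcal{S}_{\mathcal{O}}\subset N_H$; this squeezes $\mathcal{S}_H=N_H=\mathcal{S}_{\mathcal{O}}$, so two conjugates of such an $H$ are coorbit equivalent iff they are equal iff their dual orbits agree---a purely algebraic conclusion requiring no test functions. Combined with Proposition \ref{prop:coor_conj} this reduces the whole of parts (b)--(d) to Lemma \ref{lem:symgroups_repres} plus the similitude result cited above. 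A smaller issue: your part (a) claim that adjoining the discrete part ``can only identify components, so the count can only decrease'' is backwards---the orbit of the larger group is a union of translates of the orbit of the identity component and can only grow; the clean statement is that the component count is a conjugation invariant because $\mathcal{O}=A^{-T}\mathcal{O}'$, after which inspection of the three representatives gives $1$, $4$, $2$. If you wish to keep the weight-asymptotics approach for the shearlet necessity step, you must either carry out the comparison explicitly or cite the precise statement from \cite{MR4464548} or \cite{MR4669340} that shearlet groups with distinct exponents are not coorbit equivalent; nothing in the current write-up does this.
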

We refer the reader to Section \ref{sect:prelim} for more details on irreducibly admissible matrix groups and their associated coorbit spaces. An explicitly parameterized set of representatives proving part (d) will be given at the end of Section \ref{sect:main}.

Our main result will be proved by suitably combining a previously obtained classification of irreducibly admissible groups modulo \textit{conjugacy} (which is an equivalence relation that is related to but distinct from coorbit equivalence) with more recent criteria for coorbit equivalence, and related notions such as dilational symmetries of coorbit spaces. 
\section{Preliminaries}
\label{sect:prelim}
\subsection{Irreducibly admissible matrix groups}
A closed matrix subgroup $H < GL(d,\mathbb{R})$ is called a \textit{dilation group}.
 Given any dilation group $H$, we let $G:= \mathbb{R}^d \rtimes H$, the semidirect product of $H$ with $\mathbb{R}^d$, generated by arbitrary translations and dilations with elements of $H$. We denote the elements of $G$ by pairs $(x,h) \in \mathbb{R}^d \times H$. For $(x,h), (y,g)\in G$, the group law is given by 
\[
 (x,h)\circ (y,g) = (x+hy,hg)~.
\] We denote the left Haar measure on $G$ by $\mu_G$ and is $d\mu_G(x,h) = dx \frac{dh}{|\mathrm{det}(h)|^2}$, where $d h$ is the left Haar measure on $H$. The space  $L^{2}(G)$ is the set of all square-integrable functions with respect to $\mu_G$. The associated quasi-regular representation of $G$ acts on $L^2(\mathbb{R}^d)$ via
\[
\left[  \pi(x,h) f \right] (y) = |{\rm det}(h)|^{-1/2} f(h^{-1}(y-x))~
\]
for $f\in L^{2}(\mathbb{R}^{d})$. The \textit{continuous wavelet transform } of $f\in L^{2}(\mathbb{R}^{d})$ with respect to a nonzero $\psi \in L^{2}(\mathbb{R}^{d})$ is defined as 
\[
W_\psi f(x,h) = \langle f, \pi(x,h) \psi \rangle~.
\] We call $\pi$ is \textit{square-integrable} if $\pi$ is irreducible, and there exists a nonzero $\psi\in L^{2}(\mathbb{R}^d)$ such that $W_{\psi} \psi \in L^{2}(G)$.  In this case, the linear operator $W_{\psi}: L^{2}(\mathbb{R}^{d}) \rightarrow L^{2}(G)$ is a multiple of an isometry. This isometry property then gives rise to the \textit{weak-sense wavelet inversion formula} 
\[
f = \frac{1}{C_\psi} \int_{G}  W_\psi f(x,h) \pi(x,h) \psi \, d\mu_G(x,h)~
\] where $C_\psi >0$ is a constant depending only on $\psi$. We call the dilation group $H$ is \textit{irreducibly admissible} if $\pi$ is square-integrable. 

This property can be sharply characterized using the \textit{dual action} of $H$, which is the map $H \times \mathbb{R}^{d}\to \mathbb{R}^{d}, (h,\zeta)\longmapsto h^{-T}\zeta$. The dilation group $H$ is irreducibly admissible if and only if the dual action has a single open orbit $\mathcal{O}= H^{-T}{\zeta}_{\mathrm{o}}\subset \mathbb{R}^{d}$ of full measure for some ${\zeta}_{\mathrm{o}}\in \mathbb{R}^{d}$, and additionally the isotropy group (or stabilizer) $H_{{\zeta}_{\mathrm{o}}} =\{h: h^{-T}{\zeta}_{\mathrm{o}} = {\zeta}_{\mathrm{o}}\}\subset H$ is compact \cite{bib4}.

\subsection{Coorbit spaces}

The following subsection contains only a rough outline of coorbit space theory. In particular, we will refrain from describing the full range of possible (quasi-)norms that are typically employed in the discussion of wavelet coorbit spaces, such as weighted $L^{p,q}$-spaces. We refer to the earlier sources, such as \cite{MR3378833}, for more complete definitions. 

The coorbit spaces are defined in terms of the decay behaviour of the continuous wavelet transform. Given an integrability exponent $0< p < \infty$, we define $L^p(G)$ in the usual way as the space of functions whose $p^{th}$ modulus power is integrable against the left Haar measure $\mu_G$, with the usual norm and the associated identification of functions that are equal almost everywhere. For $p=\infty$, the integral is replaced by the essential supremum. 

Now, given $0 \le p < \infty$ and a suitable choice of wavelet $\psi$ (e.g. any nonzero Schwarz function $\psi$ with $\widehat{\psi} \in \mathcal{C}_c^\infty( \mathcal{O})$ \cite{MR3378833}), the coorbit space (quasi-)norm of $f \in L^2(\mathbb{R}^d)$ is defined as
\[
\| f \|_{Co_H(L^p)}: = \| W_\psi f \|_{L^p}~,
\] and the associated coorbit space $Co_H(L^p)$ as (the completion) of the set of all $f \in L^2(\mathbb{R}^d)$ for which this (quasi-)norm is finite.

\begin{remark}
It is customary to study more general classes of coorbit spaces associated to weighted mixed $L^p$-spaces as coefficient spaces, whereas we focus on $L^p$-spaces. This choice was made in the interest of a somewhat shorter exposition. It does not affect the scope of our main result, as Remark \ref{rem:coorbit_equiv} below makes clear.

Remark \ref{rem:weighted_coorbit} below provides more construction details regarding coorbit spaces, and also covers weighted mixed $L^p$-coefficient spaces, thereby allowing to describe the full range of Besov spaces as coorbit spaces.

\end{remark}

\noindent We next formalize the property that two irreducibly admisssible matrix groups have the same coorbit spaces:
\begin{definition} \label{defn:coorbit_equiv}
	Let $H_1, H_2 < GL(d,\mathbb{R})$ denote two irreducibly admissible matrix groups. We call $H_1, H_2$ are  \textit{coorbit equivalent}, denoted by $H_1 {\sim}_{\mathrm{Co}} H_2$, if for all $0<p\leq \infty$ and for all $f\in L^{2}(\mathbb{R}^{d})$ we have 
	\begin{equation*}
		\|f\|_{Co_{H_1}({L}^{p})} \asymp \|f\|_{Co_{H_2}({L}^{p})}.
	\end{equation*}
	Here, the norm equivalence is understood in the generalized sense that one side is infinity if and only if the other side is also.
\end{definition} \noindent
\begin{remark} \label{rem:coorbit_equiv}
Definition \ref{defn:coorbit_equiv} is somewhat differently worded than the original definition of coorbit equivalence in \cite[Definition 2.16]{MR4464548}. The latter requires equivalence for a family of mixed $L^{p,q}$-norms, of which the $L^p$-norms that we employ are a proper subfamily.

However, as Theorem \ref{thm:coorbit_equiv_dual_orbits} below makes clear, as soon as the coorbit space norms associated to $Co_{H_1}(L^p)$ and $Co_{H_2}(L^p)$ are equivalent on $L^2(\mathbb{R}^d)$, for {\em one} choice of $1 \le p < 2$, norm equivalence already follows for the full scale of $L^{p,q}$-spaces associated to $H_1$ and $H_2$, respectively. Hence Definition \ref{defn:coorbit_equiv} is indeed equivalent to \cite[Definition 2.16]{MR4464548}, thereby allowing the use of results from \cite{MR4464548,MR4669340} without further justification. Note also that the norm equivalence even extends to {\em weighted} mixed $L^{p,q}$-spaces, due to item $(f)$ of Theorem \ref{thm:coorbit_equiv_dual_orbits}.
\end{remark}

\begin{remark}
 Note that it is justified to ask whether the equivalence of coorbit space norms on $L^2(\mathbb{R}^d)$, which is the criterion used in Definition \ref{defn:coorbit_equiv}, is actually the proper way of stating that $H_1$ and $H_2$ have the same coorbit spaces.

The first caveat here is that generally speaking, coorbit spaces are  constructed as subspaces of so-called {\em reservoir spaces} that depend on the group, see Remark \ref{rem:weighted_coorbit} below. Hence coorbit spaces associated to different groups can never coincide, unless one uses proper identifications or embeddings, which are often cumbersome to describe. Similar issues occur when one identifies Besov spaces as coorbit spaces, see Remark \ref{rem:Besov_coorbit}.

However, there is a natural identification $L^2(\mathbb{R}^d) = Co_{H_1}(L^2) = Co_{H_2}(L^2)$, which allows to realize that a norm equivalence
\[	\|f\|_{Co_{H_1}({L}^{p})} \asymp \|f\|_{Co_{H_2}({L}^{p})}~, \]
 valid for all $f \in L^2(\mathbb{R}^d)$, gives rise to a natural map
 \[
  Co_{H_1}(L^2) \cap Co_{H_1}(L^p) \to  Co_{H_2}(L^2) \cap Co_{H_2}(L^p)~,
 \]
 that is topological with respect to the $Co_{H_i}(L^p)$-norms.
Furthermore, for $p < \infty$ and $i=1,2$ the subspace
 $Co_{H_i}(L^2) \cap Co_{H_i}(L^p)$ is {\em dense} in $Co_{H_i}(L^p)$. Hence there exists a unique bounded extension, which is a natural topological isomorphism $Co_{H_1}(L^p) \to Co_{H_2}(L^p)$. Again, the reasoning can be extended to weighted mixed $L^p$-spaces.

 A final observation in this context is that for $p,q \le 2$, there exists a natural embedding $Co_H(L^{p,q}) \subset Co_H(L^2) = L^2(\mathbb{R}^d)$, see \cite[Remark 2.11]{MR4464548}. In these settings, one can indeed state that if $H_1$ and $H_2$ are coorbit equivalent, then $Co_{H_1}(L^{p,q}) = Co_{H_2}(L^{p,q})$, {\em as subspaces of $L^2(\mathbb{R}^d)$}.
\end{remark}

We point out that there is a systematic way of deciding whether $H_1 \sim_{Co} H_2$, using the dual actions of $H_1, H_2$ and a certain map $\phi: H_1 \to H_2$ derived from these actions; see \cite{MR4464548} for a detailed exposition. For our analysis of the two-dimensional case, we will be able to rely on a small selection of results. Among these, the following is the necessary criterion for coorbit equivalence. The statement is contained in Theorem 4.17 of  \cite{MR4464548}:
\begin{proposition} \label{prop:dual_orbits_coincide}
    Assume that $H_1,H_2$ are irreducibly admissible matrix groups with open dual orbits $\mathcal{O}_1$ and $\mathcal{O}_2$, respectively. If $H_1 \sim_{Co} H_2$, then $\mathcal{O}_1 = \mathcal{O}_2$.
\end{proposition}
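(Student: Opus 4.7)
The plan is to prove the contrapositive: assuming $\mathcal{O}_1 \neq \mathcal{O}_2$, I will produce a sequence of test functions that forces the two coorbit norms apart for some $0 < p \leq \infty$. First, a geometric observation. Both orbits are open and of full Lebesgue measure, hence their complements are closed null sets; so if the orbits differ, I may assume without loss of generality that there exists $\xi_0 \in \mathcal{O}_1 \setminus \mathcal{O}_2$. Such a $\xi_0$ is automatically interior to $\mathcal{O}_1$ and belongs to $\partial \mathcal{O}_2$.

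Next I would exploit the standard Fourier-side identity
\[
\widehat{W_\psi f(\cdot, h)}(\xi) = |\det h|^{1/2}\,\widehat{f}(\xi)\,\overline{\widehat{\psi}(h^T\xi)},
\]
valid for any $\psi$ with $\widehat{\psi} \in \mathcal{C}_c^\infty(\mathcal{O})$. Inverse-transforming in the translation variable, this identity exhibits $W_\psi f(\cdot, h)$ as the localization of $\widehat{f}$ through the window $\overline{\widehat{\psi}(h^T \cdot)}$, whose support sweeps out precisely $\mathcal{O}$ as $h$ ranges over $H$. My next step would be to construct a sequence $f_n \in L^2(\mathbb{R}^d)$ with $\widehat{f_n}$ a Schwartz bump of unit $L^2$-mass concentrating sharply at $\xi_0$ along a tuned scale. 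On the $H_1$-side, since $\xi_0$ is interior to $\mathcal{O}_1$, a dilation $h \in H_1$ of matching scale makes $\overline{\widehat{\psi_1}(h^T \cdot)}$ well-localized around $\xi_0$, and I expect a uniform bound on $\|f_n\|_{Co_{H_1}(L^p)}$. On the $H_2$-side, however, no window $\overline{\widehat{\psi_2}(h^T \cdot)}$ can be centered exactly at $\xi_0$ as $h$ ranges over $H_2$, since that would force $\xi_0 \in \mathcal{O}_2$. This boundary phenomenon should yield either a divergent $\|f_n\|_{Co_{H_2}(L^p)}$ or a vanishing ratio, depending on which direction of the equivalence one chooses to violate.

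The hard part will be turning this boundary-approach heuristic into quantitative $L^p$-estimates. Because the coorbit norm integrates against $dx\,dh/|\det h|^2$, I need to control the $H_2$-measure of the set of dilations $h$ for which $\widehat{\psi_2}(h^T \cdot)$ captures a nontrivial portion of $\widehat{f_n}$; for $\xi_0 \in \partial\mathcal{O}_2$ the relevant level set in $H_2$ exhibits anomalous growth or contraction compared to the $H_1$ case, and the exponent $p$ must be tuned so that the resulting ratio actually blows up rather than cancels out. Supplying this quantitative comparison is exactly the role of the cited \cite[Theorem 4.17]{MR4464548}, which my sketch is only designed to motivate at a conceptual level; in practice I would expect the proof in that reference to isolate a single well-chosen exponent $p < 2$ at which the necessity of orbit coincidence can be read off directly from the dual-action geometry.
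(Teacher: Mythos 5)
There is a genuine gap here, on two levels. First, the paper does not actually prove this proposition: it is quoted verbatim from Theorem 4.17 of \cite{MR4464548}, so the only ``proof'' the paper offers is the citation. Your sketch, by its own admission, ends by deferring the entire quantitative core (``Supplying this quantitative comparison is exactly the role of the cited Theorem 4.17'') to that same reference. In other words, the part of your argument that would constitute a proof is precisely the part you do not supply; what remains is a motivation for why the theorem is plausible, not an independent derivation.

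Second, one of the intermediate claims in the heuristic is not merely unjustified but false as stated. If $\widehat{f_n}$ is an $L^2$-normalized bump concentrating at $\xi_0$ at scale $1/n$, then each slice $W_{\psi_1}f_n(\cdot,h)$ is band-limited to a set of diameter $O(1/n)$, so by a Bernstein--Nikolskii type inequality its $L^p(dx)$-norm is bounded \emph{below} by a constant times $n^{d(1/p-1/2)}$ times its $L^2(dx)$-norm. Hence for $p<2$ the quantity $\|f_n\|_{Co_{H_1}(L^p)}$ blows up as $n\to\infty$ even on the ``good'' side where $\xi_0$ is interior to $\mathcal{O}_1$; there is no uniform bound. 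The argument therefore cannot rest on boundedness versus divergence, but must compare two divergent rates, and nothing in the sketch identifies these rates or explains why they differ. A cleaner and more standard route (closer to what the coorbit literature actually does) is to work with a single fixed $f$ with $\widehat{f}\in\mathcal{C}_c^\infty(\mathcal{O}_1)$ whose support contains points of $\partial\mathcal{O}_2$: such an $f$ lies in $Co_{H_1}(L^p)$ for all $p$, and one then shows it fails to lie in $Co_{H_2}(L^p)$ for suitable $p$, avoiding the rate-comparison problem entirely. But making either version rigorous is exactly the content of the cited theorem, and your proposal does not replace it.
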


\subsection{Dilational symmetries} In this section, we recall the definitions of various symmetry groups that will significantly simplify our discussion. 
\begin{definition}
    Let $H$ denote an irreducibly admissible matrix group with open dual orbit $\mathcal{O}$. We define the linear symmetry group $\mathcal{S}_{\mathcal{O}}$ of $\mathcal{O}$ by 
    \begin{equation*}
        \mathcal{S}_{\mathcal{O}}:= \{A\in GL(d,\mathbb{R}): A^{T}\mathcal{O}= \mathcal{O}\}.
    \end{equation*}
\end{definition}
\begin{definition}
	Let $H<GL(d,\mathbb{R})$ denote an irreducibly admissible matrix group, and $A\in GL(d,\mathbb{R})$. We call $A$ is coorbit compatible with $H$ if for all $0<p\leq \infty$ and for all $f\in L^{2}(\mathbb{R}^{d})$ we have 
	\begin{equation*}
			\|f\|_{Co_H({L}^p)} \asymp \|f\circ A^{-1}\|_{Co_H({L}^{p})}.
	\end{equation*} We let 
	\begin{equation*}
		\mathcal{S}_H: = \{A\in GL(d,\mathbb{R}):  \mbox{$A$ is coorbit compatible with $H$} \}.
	\end{equation*}
    Furthermore, we let 
    \[
    N_H: = \{A\in GL(d,\mathbb{R)}: AHA^{-1}= H\}~,
    \] the normalizer of $H$ in $GL(d,\mathbb{R})$.
\end{definition} 
\begin{remark}
 Clearly $S_H$ is a subgroup of $GL(d,\mathbb{R})$, and we call it the coorbit symmetry group of $H$. By \cite[Theorem 2.28]{MR4669340}, $A\in GL(d,\mathbb{R})$ is coorbit compatible with $H$ if and only if $AHA^{-1} {\sim}_{\mathrm{Co}} H$, which means we have the alternative characterization $\mathcal{S}_H=\{A\in GL(d,\mathbb{R}): AHA^{-1}{\sim}_{\mathrm{Co}} H \}$. Note that the reasoning from Remark \ref{rem:coorbit_equiv} applies here as well to guarantee that our definition coincides with \cite{MR4669340}.

 Our arguments in Section \ref{sect:main} repeatedly use the following chain of inclusions, where the first is provided by \cite[Theorem 2.28]{MR4669340}, and the second one follows via Proposition \ref{prop:dual_orbits_coincide}:
\begin{equation} \label{eqn:incl_symmetry}
    N_H \subset \mathcal{S}_H \subset \mathcal{S}_{\mathcal{O}}\,\,.
\end{equation}
\end{remark}

We make the following useful observation regarding the coorbit symmetry group, and this is a special case of \cite[Corollary 4.8]{MR4832531}.
\begin{proposition} \label{prop:coor_conj}
    Let $H_1,H_2< GL(d,\mathbb{R})$ denote two irreducibly admissible matrix groups, and $A \in GL(d,\mathbb{R})$. If $H_1 \sim_{Co} H_2$, then $AH_1 A^{-1} \sim_{Co} A H_2 A^{-1}$.
\end{proposition}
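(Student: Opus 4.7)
The plan is to reduce the statement to a direct change-of-variables identity relating the coorbit norms associated with a dilation group $H$ and those associated with its conjugate $AHA^{-1}$, and then to chain three norm equivalences.

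First, I would compare the two quasi-regular representations. Let $D_A$ be the unitary dilation $(D_A f)(y) = |\det A|^{-1/2} f(A^{-1}y)$ on $L^2(\mathbb{R}^d)$. A short computation using the defining formula for $\pi$ yields the intertwining identity
\[
\pi_{AHA^{-1}}(x, AhA^{-1}) = D_A\, \pi_H(A^{-1}x, h)\, D_A^{-1}~,
\]
valid for all $(x,h) \in \mathbb{R}^d \rtimes H$. Consequently, for any admissible wavelet $\psi$,
\[
W^{AHA^{-1}}_\psi f(x, AhA^{-1}) = W^H_{D_A^{-1}\psi}(D_A^{-1}f)(A^{-1}x, h)~.
\]

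Second, I would combine this with the observation that $\Phi:(y,h) \mapsto (Ay, AhA^{-1})$ is a group isomorphism from $\mathbb{R}^d \rtimes H$ onto $\mathbb{R}^d \rtimes AHA^{-1}$, so that the pullback under $\Phi$ of the left Haar measure on the target is a positive multiple of the left Haar measure on the source. Integrating the $p$-th power of the displayed intertwining identity and applying this measure-theoretic fact (together with the change of variables $x = Ay$) gives, up to a constant independent of $f$,
\[
\|W^{AHA^{-1}}_\psi f\|_{L^p} \asymp \|W^H_{D_A^{-1}\psi}(D_A^{-1}f)\|_{L^p}~.
\]
Since both $\psi$ and $D_A^{-1}\psi$ are admissible analysis wavelets for their respective groups and coorbit norms are independent of the choice of admissible wavelet, this yields
\[
\|f\|_{Co_{AHA^{-1}}(L^p)} \asymp \|D_A^{-1}f\|_{Co_H(L^p)}
\]
uniformly in $f \in L^2(\mathbb{R}^d)$ and $0 < p \le \infty$.

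Finally, applying this equivalence to both $H_1$ and $H_2$, and invoking $H_1 \sim_{Co} H_2$ to the function $D_A^{-1}f \in L^2(\mathbb{R}^d)$, produces the chain
\[
\|f\|_{Co_{AH_1A^{-1}}(L^p)} \asymp \|D_A^{-1}f\|_{Co_{H_1}(L^p)} \asymp \|D_A^{-1}f\|_{Co_{H_2}(L^p)} \asymp \|f\|_{Co_{AH_2A^{-1}}(L^p)}~,
\]
which is exactly $AH_1A^{-1} \sim_{Co} AH_2A^{-1}$ in the sense of Definition~\ref{defn:coorbit_equiv}. The main obstacle is a bookkeeping one rather than a conceptual one: one must carefully track the constants arising from the Jacobians of $x \mapsto Ax$ and $h \mapsto AhA^{-1}$ on the two Haar measures, and verify that they factor out cleanly so that the equivalence is independent of $f$. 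Once this is done, the conclusion is purely formal.
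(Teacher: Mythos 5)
Your argument is correct, but it proceeds quite differently from the paper, which does not give a computational proof at all: Proposition~\ref{prop:coor_conj} is obtained there simply as a special case of an external result (Corollary~4.8 of the cited reference on dilational symmetries). Your route is a direct, self-contained verification: the intertwining identity $\pi_{AHA^{-1}}(x,AhA^{-1}) = D_A\,\pi_H(A^{-1}x,h)\,D_A^{-1}$ is easily checked from the defining formula for the quasi-regular representation, the map $(y,h)\mapsto(Ay,AhA^{-1})$ is indeed a topological group isomorphism of the two semidirect products so that Haar measures pull back to positive multiples of each other, and the support computation $\widehat{D_A^{-1}\psi}(\xi)=|\det A|^{-1/2}\widehat{\psi}(A^{-T}\xi)$ confirms that $D_A^{-1}\psi$ is an admissible analyzing vector for $H$ precisely when $\psi$ is one for $AHA^{-1}$ (consistent with the orbit transformation $\mathcal{O}\mapsto A^{-T}\mathcal{O}$ recorded in (\ref{eqn:dual_orbit_conjg})). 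The one ingredient you import without proof is the independence of the coorbit quasi-norm from the choice of admissible wavelet, uniformly over $0<p\le\infty$ including the quasi-Banach range; this is a standard but nontrivial pillar of the theory established in the cited coorbit literature, so relying on it is legitimate in this context. What your approach buys is transparency and independence from the machinery of the reference; what the paper's citation buys is brevity and the fact that the quoted corollary is stated in a generality (arbitrary coefficient spaces, not just $L^p$) that your computation would need the rigidity result of Remark~\ref{rem:coorbit_equiv} to recover.
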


Note that this proposition entails that the conjugation action on subgroups lifts to a conjugation action on the set of coorbit equivalence classes. In this setting, the coorbit symmetry group $\mathcal{S}_H$ is precisely the stabilizer of the coorbit equivalence class of $H$ under conjugation.

\section{Proof of Theorem \ref{thm:main}}

\label{sect:main}

 Up to common open subgroups of finite index, every irreducibly admissible matrix group in dimension two is conjugate to precisely one group from the following list \cite{MR1837417}:
\begin{enumerate}
\item  \textit{Similitude group:}
\[
H_{\smp} = \left\{ \left( \begin{array}{cc} a & b \\ -b & a \end{array} \right) : a^2+b^2 \not= 0 \right\}.
\]
The associated open orbit is given by $\mathcal{O}= \mathbb{R}^{2}\setminus\{0\}$.
\item \textit{Diagonal group:}
\[
H_{\diag} = \left\{ \left( \begin{array}{cc} a & 0 \\ 0 & b \end{array} \right) : ab \not= 0 \right\}.
\] The associated open orbit is given by $\mathcal{O}=\mathbb{R}^{*}\times \mathbb{R}^{*}$.

\item \textit{Shearlet group(s):}
\[
H_{\shr}^c = \left\{ \pm \left( \begin{array}{cc} a & b \\ 0 & a^c  \end{array} \right) : a > 0, b \in \mathbb{R} \right\},  \mbox{ with a unique } c \in \mathbb{R}.
\] The associated orbit is given by 
$\mathcal{O}=\mathbb{R}^{*}\times \mathbb{R}$.
\end{enumerate}

Let us shortly comment about the role of finite index subgroups in this context: Certain irreducibly admissible matrix groups have finite index supergroups which are again irreducibly admissible. Consider, for example, the group,
\[
H^{\textsc 1} = H_{\diag} \cup \left( \begin{array}{cc} 0 & 1 \\ 1 & 0 \end{array} \right) H_{\diag}
\] which properly contains the diagonal group.  
$H^{\textsc 1}$ is clearly distinct from $H_{\diag}$ itself, and has a distinct conjugacy class. Yet one can show, using compactness of the quotient  $H^{\textsc 1}/H_{\diag}$, that $H^{\textsc 1} \sim_{Co} H_{\diag}$, and the analogous observation holds for any conjugate. Hence, the conjugacy class of $H^{\textsc 1}$ does not contribute any new candidates of $\sim_{Co}$-equivalence classes.  

More generally speaking, it has been observed in Remark 4 of \cite{MR1633179} that for every irreducibly admissible matrix group $H$, the connected component $H_0 < H$ of the identity element has finite index in $H$. \cite[Theorem 2.7]{MR1837417} provides that for every irreducibly  admissible matrix group $H$ in dimension two, there exists a conjugate $H'$ of an element from the above list, such that $H$ and $H'$ have the same connected component. This connected component is then of finite index in both groups, which, via \cite[Lemma 4.6]{MR4832531}, yields that $H \sim_{Co} H'$. This observation establishes that the conjugacy classes represented by the above list meet every $\sim_{Co}$-equivalence class, and it justifies focusing our classification on the conjugacy classes represented by the list above.

In the following arguments, we will use the formulation that two irreducibly admissible subgroups $H_1$ and $H_2$ \textit{are conjugate up to finite index} if the respective connected components of the identity are conjugate. 

The next lemma determines the symmetry groups associated to the various representatives.
\begin{lemma} \label{lem:symgroups_repres}
\begin{enumerate}
\item[(a)] If $H= H_{\smp}$, with associated open orbit $\mathcal{O} = \mathbb{R}^2 \setminus \{ 0 \}$, then 
\[
\mathcal{S}_{\mathcal{O}} = \mathcal{S}_H = GL(2,\mathbb{R})
\]
On the other hand
\[
N_H = H \cup  \left( \begin{array}{cc} 1 & 0 \\ 0 & -1 \end{array} \right) H \subsetneq \mathcal{S}_H 
\]
 \item[(b)] If $H = H_{\diag}$, with associated open orbit $\mathcal{O} = \mathbb{R}^* \times \mathbb{R}^*$, then 
 \[ \mathcal{S}_{\mathcal{O}} = H_{\diag} \cup \left( \begin{array}{cc} 0 & 1 \\ 1 & 0 \end{array} \right) H_{\diag} ~.\] As a consequence,
 \[
 N_H = \mathcal{S}_H = \mathcal{S}_{\mathcal{O}}~.
 \]
 \item[(c)] If $H = H_{\shr}^c$ for some suitable $c \in \mathbb{R}$, with associated dual orbit $\mathcal{O} = \mathbb{R}^* \times \mathbb{R}$, then 
 \[
 \mathcal{S}_{\mathcal{O}} = \left\{ \left( \begin{array}{cc} r & s \\ 0 & t \end{array} \right) : rt \not= 0 \right\}~. 
 \]
 As a consequence,
 \[
 N_H = \mathcal{S}_H = \mathcal{S}_{\mathcal{O}}~.
 \]
\end{enumerate}
\end{lemma}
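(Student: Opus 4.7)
The plan is to pin down each of $\mathcal{S}_{\mathcal{O}}$, $N_H$, and $\mathcal{S}_H$ for each representative group and then combine them using the sandwich inclusion $N_H \subset \mathcal{S}_H \subset \mathcal{S}_{\mathcal{O}}$ from \eqref{eqn:incl_symmetry}. The determination of $\mathcal{S}_{\mathcal{O}}$ follows from the observation that $A^T \mathcal{O} = \mathcal{O}$ is equivalent to $A^T$ preserving the complement $\mathbb{R}^2 \setminus \mathcal{O}$, and I would carry this out directly in each case: in (a) the complement is $\{0\}$, giving $\mathcal{S}_{\mathcal{O}} = GL(2,\mathbb{R})$ trivially; in (b) the complement is the coordinate cross, forcing $A^T$ to be monomial; in (c) it is the single line $\{0\} \times \mathbb{R}$, forcing $A^T$ lower triangular and hence $A$ upper triangular with nonzero diagonal.

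Each normalizer $N_H$ would then be computed by direct conjugation. For (a), identifying $H_{\smp}$ with the linear span $\mathbb{R} I + \mathbb{R} J$ where $J = \bigl(\begin{smallmatrix} 0 & 1 \\ -1 & 0 \end{smallmatrix}\bigr)$, the normalizer condition becomes $AJA^{-1} \in H_{\smp}$; since $AJA^{-1}$ has trace zero and determinant one, this forces $AJA^{-1} = \pm J$, and the two possibilities yield exactly the two cosets $H_{\smp}$ and $\diag(1,-1)H_{\smp}$. For (b), a normalizer must permute the common eigenlines of a generic diagonal matrix, and so must itself be monomial. For (c), the shear subgroup is the unique maximal connected unipotent subgroup of $H_{\shr}^c$ and hence characteristic, so a normalizer must preserve its line of fixed vectors $\mathbb{R} e_1$, forcing $A$ upper triangular; a short substitution then confirms that every such $A$ does normalize $H_{\shr}^c$. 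In cases (b) and (c) these calculations already yield $N_H = \mathcal{S}_{\mathcal{O}}$, so the sandwich inclusion collapses to $N_H = \mathcal{S}_H = \mathcal{S}_{\mathcal{O}}$ and both parts are complete.

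The main obstacle is case (a), where $N_{H_{\smp}} = \mathbb{R}^+ \cdot O(2)$ is strictly smaller than $\mathcal{S}_{\mathcal{O}} = GL(2,\mathbb{R})$, so the sandwich only gives $N_{H_{\smp}} \subset \mathcal{S}_{H_{\smp}} \subset GL(2,\mathbb{R})$ and one must independently verify that every $A \in GL(2,\mathbb{R})$ is coorbit compatible with $H_{\smp}$. By polar decomposition $A = UP$ with $U \in O(2) \subset N_{H_{\smp}}$, and using that $\mathcal{S}_{H_{\smp}}$ is a group, it suffices to treat positive symmetric $A$; diagonalizing by a further orthogonal matrix then reduces the problem to $A = \diag(\lambda,1)$ with $\lambda > 0$. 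For such an $A$ the conjugate $AH_{\smp}A^{-1}$ is again a connected two-dimensional irreducibly admissible matrix group with the same open orbit $\mathbb{R}^2 \setminus \{0\}$, and I would close this step by invoking the $\phi$-map criterion for coorbit equivalence from \cite{MR4464548}, exploiting the compactness of the $SO(2)$-isotropy of $H_{\smp}$ to absorb the angular distortion introduced by the conjugation and thereby obtain the equivalence of the resulting $L^p$-coorbit norms.
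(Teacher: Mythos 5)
Your treatment of parts (b) and (c) follows essentially the paper's own route: determine $\mathcal{S}_{\mathcal{O}}$ from the requirement that $A^T$ preserve the complement of the orbit, check by direct conjugation that $\mathcal{S}_{\mathcal{O}} \subset N_H$, and collapse the chain $N_H \subset \mathcal{S}_H \subset \mathcal{S}_{\mathcal{O}}$ of (\ref{eqn:incl_symmetry}). Those computations are correct, as is your normalizer computation for $H_{\smp}$ via $AJA^{-1} = \pm J$, which actually supplies a detail the paper's proof leaves implicit.

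The gap is in part (a), at exactly the point you label the main obstacle. The reduction, via the polar decomposition and the group structure of $\mathcal{S}_{H_{\smp}}$, to the single case $A = \diag(\lambda,1)$ with $\lambda>0$ is legitimate. But the step carrying all of the analytic content --- showing that $\diag(\lambda,1)\,H_{\smp}\,\diag(\lambda,1)^{-1} \sim_{Co} H_{\smp}$ --- is not actually performed. Announcing that you would ``invoke the $\phi$-map criterion'' of \cite{MR4464548} and ``absorb the angular distortion'' names a tool and an intuition, but the criterion has concrete hypotheses (construction of the map $\phi$ between the two dilation groups from their dual actions, and verification of the associated uniformity/boundedness conditions) that must be checked for the conjugated group, which is $\mathbb{R}^+$ times a group of ellipse-preserving rotations rather than $SO(2)$; none of that is done. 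Moreover, the mechanism you cite is not quite the right one: the isotropy group of the dual action of $H_{\smp}$ at a generic point of $\mathcal{O}$ is \emph{trivial} (the group is two-dimensional and acts simply transitively on the open orbit), so there is no ``$SO(2)$-isotropy'' to exploit; what is relevant is that $SO(2)$ is a compact \emph{subgroup} of $H_{\smp}$. The paper sidesteps this entire issue by quoting $\mathcal{S}_{H_{\smp}} = GL(2,\mathbb{R})$ from Section 9 of \cite{bib7} (in essence, the statement that the coorbit spaces of the similitude group are the homogeneous Besov spaces, which are invariant under all invertible linear changes of variable). As written, your part (a) is a plan for a proof rather than a proof.
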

\begin{proof}
The equation $\mathcal{S}_{H} = GL(2,\mathbb{R})$ for $H=H_{\smp}$ was noted in Section 9 of \cite{bib7}.
For parts (b) and (c), it is easy to verify explicitly that $\mathcal{S}_{\mathcal{O}}$ is as described, and that $\mathcal{S}_{\mathcal{O}} \subset N_H$. But then (\ref{eqn:incl_symmetry}) entails the equality $N_H = \mathcal{S}_H = \mathcal{S}_{\mathcal{O}}$.
\end{proof}

\subsection{Proof of part (a)}

If $H$ is any irreducibly admissible matrix group in dimension two, we may assume that $H = AH' A^{-1}$ for some group $H'$ from the above list. If $\mathcal{O},\mathcal{O}'$ denote the associated open dual orbits, a straightforward computation then gives 
\begin{equation}
    \label{eqn:dual_orbit_conjg} H = AH' A^{-1} \Rightarrow \mathcal{O} = A^{-T} \mathcal{O}'~.
\end{equation}
In particular, the number of connected components of the dual orbit is a conjugation invariant. Since $\mathcal{O}'$ has either 1,2, or 4 connected components, the same then holds for $\mathcal{O}$.

\subsection{Proof of part (b)}
\noindent Let $\mathcal{M}$ denote the union of conjugacy classes represented by the list given at the beginning of this section. Hence 
\begin{equation*}
 \displaystyle    \mathcal{M}= [H_{\diag}] \bigcup^\bullet [H_{\smp}] \bigcup^\bullet \left( \bigcup_{c \in \mathbb{R}}^\bullet [H_{\shr}^c] \right),
\end{equation*} where $[H]$ denotes the conjugacy class of $H$. At this point it is useful to recall that the conjugacy equivalence relation is distinct from coorbit equivalence. The crucial first step of our proof is the realization that conjugacy is indeed \textit{coarser}:

\noindent{\textit{\textbf{Step 1}: Assume that $H_1 \sim_{Co} H_2$. Then $H_1$ and $H_2$ are conjugate up to finite index.}

To prove Step 1, first note that the assumption implies that $H_1$ and $H_2$ have the same open dual orbit, by Proposition \ref{prop:dual_orbits_coincide}. If the number of connected components of this open orbit is either 1 or 4, the above classification entails that $H_1$ and $H_2$ are both either conjugate (up to finite index) to $H_{\smp}$ or to $H_{\diag}$. 

In the remaining case, both open orbits have two connected components, and we obtain $H_1 = A_1 H_{\shr}^{c_1} A_1^{-1}$ and $H_2 = A_2 H_{\shr}^{c_2} A_2^{-1}$, for suitable invertible matrices $A_1, A_2$ and $c_1,c_2 \in \mathbb{R}$. Letting $B = A_1^{-1} A_2$, we obtain from Proposition \ref{prop:coor_conj} that 
\[
H_{\shr}^{c_1} \sim_{Co} B H_{\shr}^{c_2} B^{-1}~. 
\] By Proposition \ref{prop:dual_orbits_coincide}, this entails that both groups have the same open dual orbit $\mathcal{O}$, which entails $B \in \mathcal{S}_{\mathcal{O}}$. But then the equality $\mathcal{S}_{\mathcal{O}} = N_{H_{\shr}^{c_2}}$ from Lemma \ref{lem:symgroups_repres} shows $H_{\shr}^{c_1} = H_{\shr}^{c_2}$, and finally $c_1 = c_2$, as desired. Hence, the first step is shown. 

As a consequence of the first step, we just need to determine the coorbit equivalence classes \textit{within} each conjugacy class. 

\noindent{\textit{\textbf{Step 2}: Classifying within the conjugacy class of $H_{\smp}$.}}

For $H = H_{\smp}$, Lemma \ref{lem:symgroups_repres} provides that $\mathcal{S}_H = GL(2,\mathbb{R})$, which expresses that \textit{every} group conjugate to $H_{\smp}$ is already coorbit equivalent to it. Hence $[H]$ is a single coorbit equivalence class. 

\noindent{\textit{\textbf{Step 3}: Classifying within the conjugacy class of $H_{\diag}$.}}

Fix $H = H_{\diag}$, and let $H_1,H_2 \in [H]$. If $H_1 \sim_{Co} H_2$, the equality $\mathcal{S}_H = N_H$ immediately entails that $H_1 = H_2$. Furthermore, the equality $\mathcal{S}_{\mathcal{O}} = N_H$ shows that this is the case precisely if the dual orbits associated to $H_1$ resp. $H_2$ coincides. 

\noindent{\textit{\textbf{Step 4}: Classifying within the conjugacy class of $H_{\shr}^c$.}}

For $H = H_{\shr}^c$, the fact that $\mathcal{S}_H = N_H = \mathcal{S}_{\mathcal{O}}$ again allows to conclude that two members $H_1, H_2$ of the conjugacy class of $H$ are only coorbit equivalent if they are equal, and that this occurs precisely when $H_1$ and $H_2$ have the same open orbit. Hence, step 4 is proved. 

Now steps 1 through 4 settle the proof of part (b): The necessity of $\mathcal{O}_1 = \mathcal{O}_2$ was noted in Proposition \ref{prop:dual_orbits_coincide}. The open dual orbit $\mathcal{O}_1$ has one connected component precisely when the group is conjugate to the similitude group, and any such group is coorbit equivalent to the similitude group (and thus two such groups are coorbit equivalent to each other) by step 2. Likewise, the case of four connected components corresponds to the diagonal group, and is taken care of by step 3, and the case of two connected components by steps 1 and 4. 

\subsection{Proof of part (c):}

The role of connected components and open subgroups of finite index has been clarified in the second paragraph of this section. Given these remarks, part (c) follows from the fact that for $H = H_{\smp}$, one has $\mathcal{S}_H = GL(2,\mathbb{R})$, whereas for the remaining cases, $\mathcal{S}_H = N_H$ entails that conjugates of such a group $H$ are only coorbit equivalent when they are equal. 

\subsection{Proof of part (d):}

The proof of part (b) has already made clear that the conjugacy class of $H_{\smp}$ coincides with its coorbit equivalence class; it is conveniently represented by the similitude group itself. 

In the case of $H = H_{\diag}$, the fact that $\mathcal{S}_H$ is the stabilizer of the conjugation action on the coorbit equivalence classes induces a canonical bijection between these classes and the quotient space $GL(2,\mathbb{R})/\mathcal{S}_H$. 
Hence, the conjugation action of any system $\mathcal{R}$ of representatives modulo $\mathcal{S}_H$ on $H$ will provide the desired system of representatives. 

Such a system is most conveniently constructed using the equality $\mathcal{S}_{\mathcal{O}} = \mathcal{S}_{H} $, which entails that two conjugates of $H_{\diag}$ are coorbit equivalent if and only if their associated dual orbits coincide.  Note that alternatively, we may also use the criterion that the complements of the dual orbits coincide. Observe that any such complement is the union of two distinct lines through the origin, and one can use this description to give an explicit and geometrically intuitive construction of a system of representatives. 

For this purpose we claim that for any union of distinct lines $\mathbb{R} \omega_1 \cup \mathbb{R} \omega_2 \subset \mathbb{R}^2$ through the origin, i.e. with linearly independent vectors $\omega_1$ and $\omega_2$, there exists a unique pair $(\varphi, s) \in [0,\pi) \times [0,\infty)$ such that 
\begin{equation} \label{eqn:mapping_lines}
R_\varphi S_s (\mathbb{R} \times \{0 \} \cup  \{ 0 \} \times \mathbb{R} ) = \mathbb{R} \omega_1 \cup \mathbb{R} \omega_2~.
\end{equation} Here $\mathbb{R}_\varphi$ describes a rotation matrix, and $S_s$ a shearing matrix, given by 
\[
R_\varphi = \left( \begin{array}{cc} \cos(\varphi) & \sin(\varphi) \\ -\sin(\varphi) & \cos(\varphi) \end{array} \right) ~,~ S_s = \left( \begin{array}{cc} 1 & s \\ 0 & 1 \end{array} \right)~. 
\] To prove the existence of these matrices, we let $\vartheta \in (0,\frac{\pi}{2}]$ denote the smaller angle between the two lines $\mathbb{R} \omega_1 \cup \mathbb{R} \omega_2$. Given any $s \ge 0$, the shearing $S_s$ maps $\mathbb{R} \times \{ 0 \}$ onto itself, and $\{ 0 \} \times \mathbb{R}$ onto the line $\mathbb{R} (s,1)^T$, with angle $\vartheta' = \arccos(\frac{s}{\sqrt{1+s^2}}) \in (0,\frac{\pi}{2}]$ between the two image lines. A unique choice of $s \in [0,\infty)$ then guarantees $\vartheta' = \vartheta$. Finally a suitable (and unique) choice of a rotation angle $\varphi \in [0,\pi)$ guarantees the desired equation (\ref{eqn:mapping_lines}). See Figure 1 below.
%
%
%

Hence we have obtained that the family 
\[
A_{\varphi,s} = \left( R_\varphi S_s \right)^{-T} =  \left( \begin{array}{cc} \cos(\varphi) & \sin(\varphi) \\ -\sin(\varphi) & \cos(\varphi) \end{array} \right) \left( \begin{array}{cc} 1 & 0 \\ -s & 1 \end{array} \right)~, (\varphi,s) \in [0,\pi) \times [0,\infty) 
\] defines a system of representatives modulo $\mathcal{S}_{\mathcal{O}} = \mathcal{S}_H$, which then entails that 
\begin{equation} \label{eqn:coorbit_diag}
\left( A_{\varphi,s} H_{\diag} A_{\varphi,s}^{-1} \right)_{(\varphi, s) \in [0,\pi) \times [0,\infty)} 
\end{equation} 
is a system of coorbit equivalence representatives in the conjugacy class of $H_{\diag}$.
A similar but simpler reasoning applies to each of the conjugacy classes of $H_{\shr}^c$, for a fixed parameter $c \in \mathbb{R}$. Here the complement of the dual orbit is a single line through the origin, and two such lines are related by a unique rotation $R_\varphi$, $\varphi \in [0,\pi)$.
Hence the shearlet family supplies the system of coorbit representatives 
\begin{equation}\label{eqn:coorbit_shear}
\left( R_{\varphi} H_{\shr}^c R_{\varphi}^{-1} \right)_{(\varphi,c) \in [0,\pi) \times \mathbb{R}}
\end{equation}

Taking the union over the systems of representatives provides the explicit parameterization in the format postulated in part (d).

\begin{figure}
\begin{center}
\includegraphics[width=0.8\textwidth]{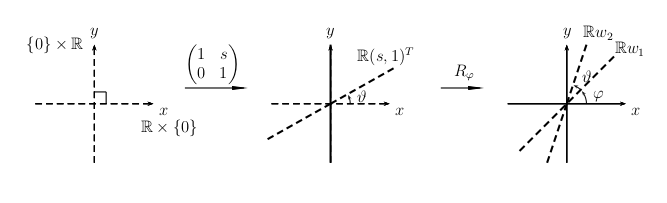}
\end{center}
\caption{Illustration of the cross-section $A_{\varphi,s}$}
\end{figure}

\section*{Appendix: Homogeneous Besov spaces, coorbit spaces and wavelet approximation theory}

\begin{remark}[Besov spaces and image processing] \label{rem:Besov_imag}
 The wavelet characterization of homogeneous Besov spaces is a key property of wavelet systems for applications such as signal and image processing. Briefly, given a function $f$ in a homogeneous Besov space  $\dot{B}_{p,q}^\alpha(\mathbb{R}^d)$, for suitable values of $p,q,\alpha$, it is possible to sharply quantify the decay of the nonlinear $N$-term approximation error of $f$ with respect to a suitably chosen wavelet orthonormal basis. This observation gave rise to rigorous estimates in the analysis of algorithms in applications such as image compression \cite{MR1175690} or denoising \cite{MR1379464}, and established the Besov spaces as a canonical signal class in wavelet signal and image processing. Further background on these aspects can be found in the textbooks \cite{MR2100455, MR1990555, MR2479996, MR1085487}.

 One can argue that current applications of wavelets in signal and image processing, including the ones that are related to machine learning, still rely on the underlying rationale that wavelets succeed in identifying salient signal structures, and that the quantification of wavelet coefficient decay, as used in the characterization of Besov spaces, is a viable rigorous expression of this intuition.
\end{remark}

\begin{remark}[Weighted coorbit spaces] \label{rem:weighted_coorbit}
We provide further construction details regarding coorbit spaces, referring to the sources \cite{bib2, MR1021139,bib7} for the full background. For the specific setting of generalized wavelet coorbit spaces of the type considered in this paper, we also refer to \cite{MR3378833}.

In order to discuss the full scale of homogeneous Besov spaces $\dot{B}_{p,q}^\alpha(\mathbb{R}^d)$, we need to introduce weighted $L^{p,q}$-spaces on a general semidirect product $G = \mathbb{R}^d \rtimes H$, where $H < GL(d,\mathbb{R})$ is irreducibly admissible.
We concentrate on weights of the form
\begin{equation} \label{eqn:def_vs}
 v_s(x,h) = |{\rm det}(h)|^{-s}~,
\end{equation} and define, for $1 \le p, q < \infty$ the associated weighted $L^{p,q}$ norm of $F: G \to \mathbb{C}$ as
\[
 \| F \|_{L^{p,q}_{v_s}} = \int_{H} \left( \int_{\mathbb{R}^d} |{\rm det}(h)|^{-sq} |F(x,h)|^q dx \right)^{p/q}~.
\] This defines the Banach space $L^{p,q}_{v_s}(G)$ of weighted $L^{p,q}$-functions. We write $L^{p,q}(G) := L^{p,q}_{v_0}(G)$, for the constant weight $v_0\equiv 1$. Note that $L^{p,p}(G) = L^p(G)$.

These weighted $L^{p,q}$-spaces fulfill the prerequisites for coorbit space theory, and thus allow to define the associated coorbit spaces $Co_H(L^{p,q}_{v_s}(G))$, for general irreducibly admissible dilation groups \cite{MR3378833}. In other words, the coorbit space norm of a signal becomes
\[
 \| f \|_{Co_H(L^{p,q}_{v_s})} =  \int_{H} \left( \int_{\mathbb{R}^d} |{\rm det}(h)|^{-sq} |W_\psi f (x,h)|^q dx \right)^{p/q}~,
\]
for suitable choices of $\psi$.

In order to rigorously define coorbit spaces, one introduces a suitable space $\mathcal{H}^1_w \subset L^2(G)$ of \textit{analyzing wavelets} \cite[4.1]{bib2}, using a suitably chosen control weight $w$ on $G$. The {\em reservoir space} $\mathcal{H}^{1,\sim}_w$ is then defined as the conjugate dual space of $\mathcal{H}^1_w$. One then obtains a so-called Banach-Gelfand triple, i.e. an embedding of Banach spaces $\mathcal{H}^{1}_w \hookrightarrow L^2(\mathbb{R}^d) \hookrightarrow \mathcal{H}^{1,\sim}_w$, where the second embedding is obtained by dualization of the first one. Furthermore, the wavelet transform $W_\psi f : G \to \mathbb{C}$ can now be meaningfully defined for $(\psi, f) \in \mathcal{H}^{1}_w \times \mathcal{H}^{1,\sim}_w$. Now the coorbit space $Co_H(L^{p,q}_{v_s})$ is rigorously defined by picking any nonzero $\psi \in \mathcal{H}^1_w$ and letting
\[
 Co_H(L^{p,q}_{v_s}) = \left\{ f \in \mathcal{H}^{1,\sim}_w : W_\psi f \in L^{p,q}_{v_s}(G) \right\} ~.
\]
\end{remark}

\begin{remark}[Homogeneous Besov spaces as coorbit spaces] \label{rem:Besov_coorbit}
Coorbit space theory was developed as a group-theoretic unified approach to Besov and modulation spaces. Hence it was noticed early on that homogeneous Besov spaces are indeed coorbit spaces associated to the similitude group $H = \mathbb{R}^+ \cdot SO(d)$, more precisely
\begin{equation} \label{eqn:besov_coorbit}
 \dot{B}_{p,q}^\alpha(\mathbb{R}^d) = Co_{H}(L^{p,q}_{v_s})~,~ s = \alpha + \frac{1}{2}-\frac{1}{q}~.
\end{equation}
We refer to the foundational source \cite{bib2} for the statement in the case $p=q$ and $d=1$, and to \cite[Theorem 4.5]{MR2898467} for general $d$. (Note that \cite{MR2898467} uses $H = \mathbb{R}^+$ in combination with isotropic wavelets which leads to the same coorbit spaces, and a slightly different parametrization of the weight function.)

It is important to note that the equality (\ref{eqn:besov_coorbit}) relies on a suitable \emph{identification} of objects in different reservoir spaces. Typically, Besov spaces $\dot{B}_{p,q}^\alpha(\mathbb{R}^d)$ are defined as subspaces of the space of tempered distributions (modulo polynomials), whereas we saw in the previous remark that coorbit spaces $Co_H(L^{p,q}_{v_s})$ are constructed as subspaces of a conjugate dual space, which depends on the group $G = \mathbb{R}^d \rtimes H$. We refer to \cite{MR2898467} for a description of how the identification is carried out in the proof of (\ref{eqn:besov_coorbit}).

An important part of coorbit theory is the \textit{discretization theory} \cite{MR1021139}, providing discrete Banach frames and associated characterization results for coorbit spaces based on discrete frame expansions. As a consequence, the derivation of nonlinear wavelet approximation rates of elements of Besov spaces, as sketched in Remark \ref{rem:Besov_imag}, can be transferred to elements $Co_H(L^{p,q}_{v_s})$, with discrete wavelet frame expansions associated to $H$ instead of the similitude group. See e.g. \cite[Section 4.4]{MR2543193} for a discussion in the case of shearlet coorbit spaces, and \cite[Section 1]{MR3452925} for the general case.

In summary, the coorbit spaces $Co_H(L^{p,q}_{v_s})$ provide a description of the approximation theoretic properties of wavelet systems associated to $H$, in the same way that homogeneous Besov spaces achieve this for wavelet systems associated to the similitude group.
\end{remark}

We finish with a result justifying our Definition \ref{defn:coorbit_equiv} of coorbit equivalence, which focusses on $Co_H(L^p)$ as reference spaces. Most of the theorem is already known from \cite{MR4464548}; however, item $(f)$ extends the statement to certain weighted mixed $L^p$ norms, and is new. For the terminology used in item (d) of the theorem, as well as in the proof, we refer to \cite{MR4464548}.

\begin{theorem} \label{thm:coorbit_equiv_dual_orbits}
 Let $H_1, H_2 < GL(\mathbb{R}^d)$ denote irreducibly admissible matrix groups, and let $\mathcal{O}_1,\mathcal{O}_2$ denote the associated open dual orbits. Then the following are equivalent:
 \begin{enumerate}
  \item[(a)] For all $0 < p, q \le \infty$ and for all $f \in L^2(\mathbb{R}^d)$,
  	\begin{equation*}
		\|f\|_{Co_{H_1}({L}^{p,q})} \asymp \|f\|_{Co_{H_2}({L}^{p,q})}~,
	\end{equation*}
	in the generalized sense that one side is infinity if and only if the other side is also.
  \item[(b)] For all $1 \le p,q \le 2$: $Co(L^{p,q}(\mathbb{R}^d \rtimes H_1)) = Co(L^{p,q}(\mathbb{R}^d \rtimes H_2))$, as subspaces of $L^2(\mathbb{R}^d)$.
  \item[(c)] There exists $1 \le p,q \le 2$ with $(p,q) \not= (2,2)$, such that $Co(L^{p,q}(\mathbb{R}^d \rtimes H_1)) = Co(L^{p,q}(\mathbb{R}^d \rtimes H_2))$, as subspaces of $L^2(\mathbb{R}^d)$.
  \item[(d)] $\mathcal{O}_1 =  \mathcal{O}_2$, and the coverings induced by $H_1$ and $H_2$ on the common open orbit are weakly equivalent.
  \item[(e)] $H_1$ and $H_2$ are coorbit equivalent.
  \item[(f)] For all $0 < p,q \le \infty$, all $s \in \mathbb{R}$ and all $f \in L^2(\mathbb{R}^d)$,
  	\begin{equation*}
		\|f\|_{Co_{H_1}({L}^{p,q}_{v_s})} \asymp \|f\|_{Co_{H_2}({L}^{p,q}_{v_s})}~,
	\end{equation*} with $v_s$ defined in (\ref{eqn:def_vs}), 
	in the generalized sense that one side is infinity if and only if the other side is also.
 \end{enumerate}
\end{theorem}
\begin{proof}
 The equivalence of $(a)$ through $(d)$ is \cite[Theorem 2.18]{MR4464548}; note the slight difference between our Definition \ref{defn:coorbit_equiv} and \cite[Definition 2.16]{MR4464548}. The implications $(a) \Rightarrow (e) \Rightarrow (c)$ as well as  $(f) \Rightarrow (a)$ are immediate. Hence it remains to prove $(d) \Rightarrow (f)$.

  The following is essentially an adaptation of the proof of the direction $(d) \Rightarrow (a)$ to the weighted setting. We sketch the argument, using results and terminology from \cite{MR4464548}.

  Let $\mathcal{O} = \mathcal{O}_1 = \mathcal{O}_2$, which is an open subset of $\mathbb{R}^d$ with complement of measure zero. Now the Banach-Gelfand triple $\mathcal{H}^{1}_w \hookrightarrow L^2(\mathbb{R}^d) \hookrightarrow \mathcal{H}^{1,\sim}_w$ allows to define an operator
  \[
   \mathcal{F}_1:  \mathcal{H}^{1,\sim}_w \to \mathcal{D}'(\mathcal{O})~,
  \] see \cite[Corollary 10]{bib7}. Here $\mathcal{D}'(\mathcal{O})$ denotes the set of linear functionals on $C_c^\infty (\mathcal{O})$, defined and topologized as usual \cite[6.7]{MR1157815}. The operator coincides with the Plancherel transform on $L^2(\mathbb{R}^{d})$ \cite[Remark 2.15]{MR4464548}. Furthermore, $\mathcal{F}_1$, when restricted to $Co_H(L^{p,q}_{v_s})$, will give rise to an \textit{isomorphism} with a suitable \textit{decomposition space}, which we describe next.

  The dual action of $H_1$ gives rise to a so-called \textit{induced covering} of $\mathcal{O}$,
 \[
  \mathcal{P} = (P_i)_{i \in I}~, P_i = h_i^{-T} U_1~,
 \] with $U_1 \subset \mathcal{O}$ a suitable relatively compact, connected open set, and $(h_i)_{i \in I} \subset H_1$  a \textit{well-spread} family; see \cite[Subsection 2.2]{MR4464548}.

 Fixing any $s \in \mathbb{R}$, we define a weight on the index set $I$ via $u_s^1 : I \to \mathbb{R}^+$,
 \begin{equation} \label{eqn:defn_u1}
 u_s^1(i) = |{\rm det}(h_i)|^{\frac{1}{2}-\frac{1}{q}} v_s(h_i) = |{\rm det}(h_i)|^{\frac{1}{2}-\frac{1}{q}-1}
  = \frac{|P_i|^{1-\frac{1}{2}+\frac{1}{q}}}{|U_1|^{1-\frac{1}{2}+\frac{1}{q}}}~,
 \end{equation} with $|A|$ denoting Lebesgue measure of the set $A \subset \mathbb{R}^d$.

Then, by \cite[Theorem 2.13]{MR4464548}, the restriction
 \[
  \mathcal{F}_1 : Co(L^{p,q}_{v_s}(\mathbb{R}^d \rtimes H_1)) \to \mathcal{D}(\mathcal{P},L^p,\ell^q_{u^1})~,
 \] is a topological isomorphism of Banach spaces, where the right-hand side of the definition is a so-called \textit{Fourier-side decomposition space}  \cite[Subsection 2.1]{MR4464548}, a subspace of $\mathcal{D}'(\mathcal{O})$.

The same reasoning can also be applied to $H_2$: The dual action of $H_2$ induces a covering $\mathcal{Q} = (Q_j)_{j \in J}$ of $\mathcal{O}$, based on a well-spread family $(g_j)_{j \in J} \subset H_2$ via $Q_j = g_j^{-T} U_2$. We define an associated weight $u_s^1 : I \to \mathbb{R}^+$,
 \begin{equation} \label{eqn:defn_u2}
 u_s^2(j) = |{\rm det}(g_j)|^{\frac{1}{2}-\frac{1}{q}} v_s(g_j) = |{\rm det}(g_j)|^{\frac{1}{2}-\frac{1}{q}-1}
  = \frac{|Q_j|^{1-\frac{1}{2}+\frac{1}{q}}}{|U_2|^{1-\frac{1}{2}+\frac{1}{q}}}~,
 \end{equation}
 and thus obtain a topological isomorphism
 \[
   \mathcal{F}_2 : Co(L^{p,q}_{v_s}(\mathbb{R}^d \rtimes H_2)) \to \mathcal{D}(\mathcal{Q},L^p,\ell^q_{u^2})~,
 \] as restriction of a suitable operator
\[  \mathcal{F}_2:  \mathcal{H}^{1,\sim}_w \to \mathcal{D}'(\mathcal{O})~.
\]
Note that the space $\mathcal{H}^{1,\sim}_w$ is not the same as the identically denoted space associated to $H_1$. However, $\mathcal{F}_2$ again restricts to the Plancherel transform on $L^2(\mathbb{R}^d)$.

 We now use the assumption that the induced coverings are weakly equivalent; see e.g. \cite[Definition 2.2]{MR4464548} for a definition. This property entails, via \cite[Lemma 2.8]{MR4464548}, that
 \[
  \forall i \in I, j \in J ~:~ P_i \cap Q_j \not= \emptyset \Rightarrow C_1^{-1} |P_i| \le |Q_j| \le C_1 |P_i|~,
 \] with a constant $C_1 \ge 1$. Combining this fact with  (\ref{eqn:defn_u1}) and (\ref{eqn:defn_u2}) yields that
\[
  \forall i \in I, j \in J ~:~ P_i \cap Q_j \not= \emptyset \Rightarrow C_2^{-1} u^1(i) \le u^2(j) \le C_2 u^1(i)~,
 \] for a constant $C_2>1$.
 This property, together with weak equivalence of the coverings, finally ensures via \cite[Lemma 2.4]{MR4464548} that
 \[
  \mathcal{D}(\mathcal{P},L^p,\ell^q_{u^1}) = \mathcal{D}(\mathcal{Q},L^p,\ell^q_{u^2})~.
 \] This equality, the topological isomorphisms $ \mathcal{F}_1 : Co(L^{p,q}_{v_s}(\mathbb{R}^d \rtimes H_1)) \to \mathcal{D}(\mathcal{P},L^p,\ell^q_{u^1})$ and $ \mathcal{F}_2 : Co(L^{p,q}_{v_s}(\mathbb{R}^d \rtimes H_2)) \to \mathcal{D}(\mathcal{P},L^p,\ell^q_{u^2})$, and the fact that $\mathcal{F}_1$ and $\mathcal{F}_2$ coincide on $L^2(\mathbb{R}^d)$, finally entail the norm equivalence in $(f)$.
\end{proof}

\section*{Concluding Remarks}
Wavelet transforms and their relatives have been repeatedly proposed as a tool in image processing and analysis (see e.g. \cite{MR2100455}), and our focus on the two-dimensional case can in part be understood as a contribution towards a better understanding of the relative merits of various wavelet constructions for image processing purposes. Moreover, this problem provides a convenient showcase for the demonstration of recently developed methods from coorbit theory, in particular for the purpose of classification. We point out that the approach presented here can be adapted to higher dimensions. An analogous study of the three-dimensional case is currently in preparation, which poses significantly greater challenges than dimension two, mostly due to the much larger reservoir of conjugacy classes in dimension three. 
\section{Acknowledgments}
We thank the reviewers for useful comments and suggestions.
Vaishakh Jayaprakash is supported
by the INSPIRE Ph.D. Fellowship of the Department of Science and Technology, Government of India. Noufal Asharaf is supported by the project RUSA 2.0, T3A from the Cochin University of Science and Technology.
\bibliographystyle{abbrv}
\bibliography{sn-bibliography}

@book {MR2479996,
    AUTHOR = {Mallat, St\'ephane},
     TITLE = {A wavelet tour of signal processing},
   EDITION = {Third},
      NOTE = {The sparse way,
              With contributions from Gabriel Peyr\'e},
 PUBLISHER = {Elsevier/Academic Press, Amsterdam},
      YEAR = {2009},
     PAGES = {xxii+805},
      ISBN = {978-0-12-374370-1},
   MRCLASS = {94A12 (94-01 94-02)},
  MRNUMBER = {2479996},
}

@book {MR1157815,
    AUTHOR = {Rudin, Walter},
     TITLE = {Functional analysis},
    SERIES = {International Series in Pure and Applied Mathematics},
   EDITION = {Second},
 PUBLISHER = {McGraw-Hill, Inc., New York},
      YEAR = {1991},
     PAGES = {xviii+424},
      ISBN = {0-07-054236-8},
   MRCLASS = {46-01 (47-01)},
  MRNUMBER = {1157815},
}

@article {MR2543193,
    AUTHOR = {Dahlke, Stephan and Kutyniok, Gitta and Steidl, Gabriele and
              Teschke, Gerd},
     TITLE = {Shearlet coorbit spaces and associated {B}anach frames},
   JOURNAL = {Appl. Comput. Harmon. Anal.},
  FJOURNAL = {Applied and Computational Harmonic Analysis. Time-Frequency
              and Time-Scale Analysis, Wavelets, Numerical Algorithms, and
              Applications},
    VOLUME = {27},
      YEAR = {2009},
    NUMBER = {2},
     PAGES = {195--214},
      ISSN = {1063-5203,1096-603X},
   MRCLASS = {42C15},
  MRNUMBER = {2543193},
MRREVIEWER = {\'Arp\'ad\ B\'enyi},
       DOI = {10.1016/j.acha.2009.02.004},
       URL = {https://doi.org/10.1016/j.acha.2009.02.004},
}

@article {MR2898467,
    AUTHOR = {Ullrich, Tino},
     TITLE = {Continuous characterizations of {B}esov-{L}izorkin-{T}riebel
              spaces and new interpretations as coorbits},
   JOURNAL = {J. Funct. Spaces Appl.},
  FJOURNAL = {Journal of Function Spaces and Applications},
      YEAR = {2012},
     PAGES = {Art. ID 163213, 47},
      ISSN = {0972-6802,1758-4965},
   MRCLASS = {46E30 (42B35)},
  MRNUMBER = {2898467},
       DOI = {10.1155/2012/163213},
       URL = {https://doi.org/10.1155/2012/163213},
}

@book {MR1990555,
    AUTHOR = {Cohen, Albert},
     TITLE = {Numerical analysis of wavelet methods},
    SERIES = {Studies in Mathematics and its Applications},
    VOLUME = {32},
 PUBLISHER = {North-Holland Publishing Co., Amsterdam},
      YEAR = {2003},
     PAGES = {xviii+336},
      ISBN = {0-444-51124-5},
   MRCLASS = {65T60 (42C40 94A08)},
  MRNUMBER = {1990555},
}

@article {MR1379464,
    AUTHOR = {Donoho, David L. and Johnstone, Iain M.},
     TITLE = {Adapting to unknown smoothness via wavelet shrinkage},
   JOURNAL = {J. Amer. Statist. Assoc.},
  FJOURNAL = {Journal of the American Statistical Association},
    VOLUME = {90},
      YEAR = {1995},
    NUMBER = {432},
     PAGES = {1200--1224},
      ISSN = {0162-1459,1537-274X},
   MRCLASS = {62G05},
  MRNUMBER = {1379464},
       URL =
              {http://links.jstor.org/sici?sici=0162-1459(199512)90:432<1200:ATUSVW>2.0.CO;2-K&origin=MSN},
}

@article {MR1175690,
    AUTHOR = {DeVore, Ronald A. and Jawerth, Bj\"orn and Popov, Vasil},
     TITLE = {Compression of wavelet decompositions},
   JOURNAL = {Amer. J. Math.},
  FJOURNAL = {American Journal of Mathematics},
    VOLUME = {114},
      YEAR = {1992},
    NUMBER = {4},
     PAGES = {737--785},
      ISSN = {0002-9327,1080-6377},
   MRCLASS = {42C15},
  MRNUMBER = {1175690},
MRREVIEWER = {Charles\ K.\ Chui},
       DOI = {10.2307/2374796},
       URL = {https://doi.org/10.2307/2374796},
}

@article {MR747432,
    AUTHOR = {Grossmann, A. and Morlet, J.},
     TITLE = {Decomposition of {H}ardy functions into square integrable
              wavelets of constant shape},
   JOURNAL = {SIAM J. Math. Anal.},
  FJOURNAL = {SIAM Journal on Mathematical Analysis},
    VOLUME = {15},
      YEAR = {1984},
    NUMBER = {4},
     PAGES = {723--736},
      ISSN = {0036-1410},
   MRCLASS = {81K05 (42B30 94A11)},
  MRNUMBER = {747432},
MRREVIEWER = {D.\ Stanomir},
       DOI = {10.1137/0515056},
       URL = {https://doi.org/10.1137/0515056},
}

@article {MR868528,
    AUTHOR = {Grossmann, A. and Morlet, J. and Paul, T.},
     TITLE = {Transforms associated to square integrable group
              representations. {II}. {E}xamples},
   JOURNAL = {Ann. Inst. H. Poincar\'e{} Phys. Th\'eor.},
  FJOURNAL = {Annales de l'Institut Henri Poincar\'e. Physique Th\'eorique},
    VOLUME = {45},
      YEAR = {1986},
    NUMBER = {3},
     PAGES = {293--309},
      ISSN = {0246-0211},
   MRCLASS = {22D10 (81D25)},
  MRNUMBER = {868528},
MRREVIEWER = {A.\ O.\ Barut},
       URL = {http://www.numdam.org/item?id=AIHPB_1986__45_3_293_0},
}

@incollection {MR1010910,
    AUTHOR = {Murenzi, R.},
     TITLE = {Wavelet transforms associated to the {$n$}-dimensional
              {E}uclidean group with dilations: signal in more than one
              dimension},
 BOOKTITLE = {Wavelets ({M}arseille, 1987)},
    SERIES = {Inverse Probl. Theoret. Imaging},
     PAGES = {239--246},
 PUBLISHER = {Springer, Berlin},
      YEAR = {1989},
      ISBN = {3-540-51159-8},
   MRCLASS = {22E70 (42C15)},
  MRNUMBER = {1010910},
}

@article {MR3452925,
    AUTHOR = {F\"uhr, Hartmut},
     TITLE = {Vanishing moment conditions for wavelet atoms in higher
              dimensions},
   JOURNAL = {Adv. Comput. Math.},
  FJOURNAL = {Advances in Computational Mathematics},
    VOLUME = {42},
      YEAR = {2016},
    NUMBER = {1},
     PAGES = {127--153},
      ISSN = {1019-7168,1572-9044},
   MRCLASS = {42C15 (42C40 46E35)},
  MRNUMBER = {3452925},
MRREVIEWER = {Daniel\ Vera},
       DOI = {10.1007/s10444-015-9414-3},
       URL = {https://doi.org/10.1007/s10444-015-9414-3},
}

@article {MR1021139,
    AUTHOR = {Feichtinger, Hans G. and Gr\"ochenig, K. H.},
     TITLE = {Banach spaces related to integrable group representations and
              their atomic decompositions. {I}},
   JOURNAL = {J. Funct. Anal.},
  FJOURNAL = {Journal of Functional Analysis},
    VOLUME = {86},
      YEAR = {1989},
    NUMBER = {2},
     PAGES = {307--340},
      ISSN = {0022-1236,1096-0783},
   MRCLASS = {43A99 (22D10 46E35 46F05)},
  MRNUMBER = {1021139},
MRREVIEWER = {Keith\ Taylor},
       DOI = {10.1016/0022-1236(89)90055-4},
       URL = {https://doi.org/10.1016/0022-1236(89)90055-4},
}

@article {MR1377491,
    AUTHOR = {Bernier, David and Taylor, Keith F.},
     TITLE = {Wavelets from square-integrable representations},
   JOURNAL = {SIAM J. Math. Anal.},
  FJOURNAL = {SIAM Journal on Mathematical Analysis},
    VOLUME = {27},
      YEAR = {1996},
    NUMBER = {2},
     PAGES = {594--608},
      ISSN = {0036-1410},
   MRCLASS = {22D10 (42C15)},
  MRNUMBER = {1377491},
MRREVIEWER = {G.\ C.\ Bohnk\'e},
       DOI = {10.1137/S0036141093256265},
       URL = {https://doi.org/10.1137/S0036141093256265},
}

@book {MR2130226,
    AUTHOR = {F\"{u}hr, Hartmut},
     TITLE = {Abstract harmonic analysis of continuous wavelet transforms},
    SERIES = {Lecture Notes in Mathematics},
    VOLUME = {1863},
 PUBLISHER = {Springer-Verlag, Berlin},
      YEAR = {2005},
     PAGES = {x+193},
      ISBN = {3-540-24259-7},
   MRCLASS = {43A30 (42C40 43A80)},
  MRNUMBER = {2130226},
MRREVIEWER = {Margit\ R\"{o}sler},
       DOI = {10.1007/b104912},
       URL = {https://doi.org/10.1007/b104912},
}

@inproceedings{bib2,
author={Feichtinger, Hans G. and Gr\"ochenig, K. H.},
title= "A unified approach to atomic decompositions via integrable group representations",
booktitle="Function Spaces and Applications",
year="1988",
publisher="Springer Berlin Heidelberg",
address="Berlin, Heidelberg",
pages="52--73",
isbn="978-3-540-38841-8"
}

@article {MR1633179,
    AUTHOR = {F\"uhr, Hartmut},
     TITLE = {Continuous wavelet transforms with abelian dilation groups},
   JOURNAL = {J. Math. Phys.},
  FJOURNAL = {Journal of Mathematical Physics},
    VOLUME = {39},
      YEAR = {1998},
    NUMBER = {8},
     PAGES = {3974--3986},
      ISSN = {0022-2488,1089-7658},
   MRCLASS = {42C15 (22D12 81R30)},
  MRNUMBER = {1633179},
       DOI = {10.1063/1.532480},
       URL = {https://doi.org/10.1063/1.532480},
}

@article {MR3378833,
    AUTHOR = {F\"uhr, Hartmut},
     TITLE = {Coorbit spaces and wavelet coefficient decay over general
              dilation groups},
   JOURNAL = {Trans. Amer. Math. Soc.},
  FJOURNAL = {Transactions of the American Mathematical Society},
    VOLUME = {367},
      YEAR = {2015},
    NUMBER = {10},
     PAGES = {7373--7401},
      ISSN = {0002-9947,1088-6850},
   MRCLASS = {42C15 (42C40 46E35)},
  MRNUMBER = {3378833},
MRREVIEWER = {Robert\ S.\ Doran},
       DOI = {10.1090/S0002-9947-2014-06376-9},
       URL = {https://doi.org/10.1090/S0002-9947-2014-06376-9},
}

@article {MR2643586,
    AUTHOR = {Dahlke, Stephan and Steidl, Gabriele and Teschke, Gerd},
     TITLE = {The continuous shearlet transform in arbitrary space
              dimensions},
   JOURNAL = {J. Fourier Anal. Appl.},
  FJOURNAL = {The Journal of Fourier Analysis and Applications},
    VOLUME = {16},
      YEAR = {2010},
    NUMBER = {3},
     PAGES = {340--364},
      ISSN = {1069-5869,1531-5851},
   MRCLASS = {42C15 (22D10 47B25)},
  MRNUMBER = {2643586},
MRREVIEWER = {Azita\ Mayeli},
       DOI = {10.1007/s00041-009-9107-8},
       URL = {https://doi.org/10.1007/s00041-009-9107-8},
}

@inproceedings{bib4,
  title={Generalized {C}alder{\'o}n conditions and regular orbit spaces},
  author={F{\"u}hr, Hartmut},
  booktitle={Colloquium Mathematicum},
  volume={120},
  number={1},
  pages={103--126},
  year={2010},
  organization={Institute of Mathematics Polish Academy of Sciences}
}

@article {MR1122655,
    AUTHOR = {Bohnke, G.},
     TITLE = {Treillis d'ondelettes associ\'es aux groupes de {L}orentz},
   JOURNAL = {Ann. Inst. H. Poincar\'e{} Phys. Th\'eor.},
  FJOURNAL = {Annales de l'Institut Henri Poincar\'e. Physique Th\'eorique},
    VOLUME = {54},
      YEAR = {1991},
    NUMBER = {3},
     PAGES = {245--259},
      ISSN = {0246-0211},
   MRCLASS = {22E43 (22E45 46N50 81R30)},
  MRNUMBER = {1122655},
MRREVIEWER = {J.-P.\ Antoine},
       URL = {http://www.numdam.org/item?id=AIHPA_1991__54_3_245_0},
}

@book {MR1085487,
    AUTHOR = {Meyer, Yves},
     TITLE = {Ondelettes et op\'erateurs. {I}},
    SERIES = {Actualit\'es Math\'ematiques. [Current Mathematical Topics]},
      NOTE = {Ondelettes. [Wavelets]},
 PUBLISHER = {Hermann, Paris},
      YEAR = {1990},
     PAGES = {xii+215},
      ISBN = {2-7056-6125-0},
   MRCLASS = {42-02 (42C15 46E30 47G99)},
  MRNUMBER = {1085487},
MRREVIEWER = {Karlheinz\ Gr\"ochenig},
}

@article {MR1881293,
    AUTHOR = {Laugesen, R. S. and Weaver, N. and Weiss, G. L. and Wilson, E.
              N.},
     TITLE = {A characterization of the higher dimensional groups associated
              with continuous wavelets},
   JOURNAL = {J. Geom. Anal.},
  FJOURNAL = {The Journal of Geometric Analysis},
    VOLUME = {12},
      YEAR = {2002},
    NUMBER = {1},
     PAGES = {89--102},
      ISSN = {1050-6926,1559-002X},
   MRCLASS = {42C40 (22E15)},
  MRNUMBER = {1881293},
MRREVIEWER = {Li\ Zhong\ Peng},
       DOI = {10.1007/BF02930862},
       URL = {https://doi.org/10.1007/BF02930862},
}

@article {MR1979396,
    AUTHOR = {Fabec, R. and \'Olafsson, G.},
     TITLE = {The continuous wavelet transform and symmetric spaces},
   JOURNAL = {Acta Appl. Math.},
  FJOURNAL = {Acta Applicandae Mathematicae},
    VOLUME = {77},
      YEAR = {2003},
    NUMBER = {1},
     PAGES = {41--69},
      ISSN = {0167-8019,1572-9036},
   MRCLASS = {42C40},
  MRNUMBER = {1979396},
MRREVIEWER = {Walter\ Schempp},
       DOI = {10.1023/A:1023687917021},
       URL = {https://doi.org/10.1023/A:1023687917021},
}

@article{bib7,
  title={Wavelet coorbit spaces viewed as decomposition spaces},
  author={F{\"u}hr, Hartmut and Voigtlaender, Felix},
  journal={Journal of Functional Analysis},
  volume={269},
  number={1},
  pages={80--154},
  year={2015},
  publisher={Elsevier}
}

@article {MR1837417,
    AUTHOR = {F\"uhr, Hartmut},
     TITLE = {Continuous wavelets transforms from semidirect products},
   JOURNAL = {Cienc. Mat. (Havana)},
  FJOURNAL = {Revista Ciencias Matem\'aticas},
    VOLUME = {18},
      YEAR = {2000},
    NUMBER = {2},
     PAGES = {179--190},
      ISSN = {0256-5374},
   MRCLASS = {42C40},
  MRNUMBER = {1837417},
}

@article {MR4669340,
    AUTHOR = {F\"uhr, Hartmut and Raisi-Tousi, Reihaneh},
     TITLE = {Dilational symmetries of decomposition and coorbit spaces},
   JOURNAL = {Appl. Comput. Harmon. Anal.},
  FJOURNAL = {Applied and Computational Harmonic Analysis. Time-Frequency
              and Time-Scale Analysis, Wavelets, Numerical Algorithms, and
              Applications},
    VOLUME = {69},
      YEAR = {2024},
     PAGES = {Paper No. 101610, 22},
      ISSN = {1063-5203,1096-603X},
   MRCLASS = {42C15 (42C40 43A15 43A65 46C15 51F30)},
  MRNUMBER = {4669340},
       DOI = {10.1016/j.acha.2023.101610},
       URL = {https://doi.org/10.1016/j.acha.2023.101610},
}

@article {MR4464548,
    AUTHOR = {F\"uhr, Hartmut and Koch, Ren\'e},
     TITLE = {Classifying decomposition and wavelet coorbit spaces using
              coarse geometry},
   JOURNAL = {J. Funct. Anal.},
  FJOURNAL = {Journal of Functional Analysis},
    VOLUME = {283},
      YEAR = {2022},
    NUMBER = {9},
     PAGES = {Paper No. 109637, 52},
      ISSN = {0022-1236,1096-0783},
   MRCLASS = {42C15 (42C40 43A15 43A65 46C15 51F30)},
  MRNUMBER = {4464548},
MRREVIEWER = {Hari\ Krishan\ Malhotra},
       DOI = {10.1016/j.jfa.2022.109637},
       URL = {https://doi.org/10.1016/j.jfa.2022.109637},
}

@article {MR4832531,
    AUTHOR = {F\"uhr, Hartmut and van Velthoven, Jordy Timo and
              Voigtlaender, Felix},
     TITLE = {On wavelet coorbit spaces associated to different dilation
              groups},
   JOURNAL = {J. Fourier Anal. Appl.},
  FJOURNAL = {The Journal of Fourier Analysis and Applications},
    VOLUME = {30},
      YEAR = {2024},
    NUMBER = {6},
     PAGES = {Paper No. 74, 43},
      ISSN = {1069-5869,1531-5851},
   MRCLASS = {42B35 (42C15 42C40 43A65 51F30)},
  MRNUMBER = {4832531},
       DOI = {10.1007/s00041-024-10132-9},
       URL = {https://doi.org/10.1007/s00041-024-10132-9},
}

@book {MR2100455,
    AUTHOR = {Antoine, Jean Pierre and Murenzi, Romain and Vandergheynst,
              Pierre and Ali, Syed Twareque},
     TITLE = {Two-dimensional wavelets and their relatives},
 PUBLISHER = {Cambridge University Press, Cambridge},
      YEAR = {2004},
     PAGES = {xviii+458},
      ISBN = {0-521-62406-1},
   MRCLASS = {42-02 (42C40)},
  MRNUMBER = {2100455},
MRREVIEWER = {Ahmed\ I.\ Zayed},
       DOI = {10.1017/CBO9780511543395},
       URL = {https://doi.org/10.1017/CBO9780511543395},
}



\end{document}